\documentclass[a4paper,12pt,reqno]{article}
\usepackage[latin1]{inputenc}
\usepackage[T1]{fontenc}
\usepackage{lmodern}
\usepackage[english]{babel}
\usepackage{amsmath}
\usepackage{amssymb}
\usepackage{amsthm}
\usepackage{bm}
\usepackage[pagestyles]{titlesec}\usepackage[pdfpagemode=UseNone,pdfstartview=FitH]{hyperref}
\addtolength{\textheight}{3\baselineskip}
\addtolength{\topmargin}{-3\baselineskip}
\setlength{\oddsidemargin}{0.5\paperwidth}
\addtolength{\oddsidemargin}{-0.5\textwidth}
\addtolength{\oddsidemargin}{-1in}
\setlength{\evensidemargin}{\oddsidemargin}
\renewpagestyle{plain}{\setfoot[\thepage][][]{}{}{\thepage}}
\pagestyle{plain}
\theoremstyle{plain}
\newtheorem{theorem}{Theorem}[section]

\theoremstyle{definition}
\newtheorem{definition}{Definition}[section]
\newtheorem{example}{Example}[section]
\newtheorem{remark}{Remark}[section]
\numberwithin{equation}{section}
\allowdisplaybreaks[1]
  
\newcommand*{\Zset}{\mathbb{Z}}

\newcommand*{\Cset}{\mathbb{C}}  
\begin{document}
\title{\textbf{Some properties of row-adjusted meet and join matrices}}
\author{}
\date{7.9.2011}
\maketitle
\begin{center}
\textsc{Mika Mattila$^*$ and Pentti Haukkanen}\\
School of Information Sciences\\
FI-33014 University of Tampere, Finland\\[5mm]

\end{center}
{\bf Abstract} Let $(P,\preceq)$ be a lattice, $S$ a finite subset of $P$ and $f_1,f_2,\ldots,f_n$ complex-valued functions on $P$. We define row-adjusted meet and join matrices on $S$ by $(S)_{f_1,\ldots,f_n}=(f_i(x_i\wedge x_j))$ and $[S]_{f_1,\ldots,f_n}=(f_i(x_i\vee x_j))$. In this paper we determine the structure of the matrix $(S)_{f_1,\ldots,f_n}$ in general case and in the case when the set $S$ is meet closed we give bounds for $\text{rank} (S)_{f_1,\ldots,f_n}$ and present expressions for $\det (S)_{f_1,\ldots,f_n}$ and $(S)_{f_1,\ldots,f_n}^{-1}$. The same is carried out dually for row-adjusted join matrix of a join closed set $S$.
\\[5mm]
\emph{Key words and phrases:} Meet matrix, Join matrix, GCD matrix, LCM matrix, Smith determinant\\
\emph{AMS Subject Classification:} 11C20, 15B36, 06B99\\[5mm]
\emph{$^\ast$Corresponding\ author.}\ \textit{Tel.:}\ +358\ 31\ 3551\ 7581,\ \textit{fax:}\ +358\ 31\ 3551\ 6157\\
\emph{E-mail addresses:}\ mika.mattila@uta.fi\ (M. Mattila),\\\hspace*{32mm} pentti.haukkanen@uta.fi\ (P. Haukkanen)

\newpage

\section{Introduction}

In 1876 Smith \cite{S} presented a formula for the determinant of the $n\times n$ matrix $((i,j))$, having the greatest common divisor of $i$ and $j$ as its $ij$ element. During the 20th century many other results concerning matrices with similar structure were published, see for example \cite{HWS, LS, W}. In 1989 Beslin and Ligh \cite{BeL} introduced the concept of a GCD matrix on a set $S$, where $S=\{x_1,x_2,\ldots,x_n\}\subset\Zset^+$ with $x_1<x_2<\cdots<x_n$ and the GCD matrix $(S)$ has $(x_i,x_j)$ as its $ij$ entry. Since then numerous publications have appeared in order to universalize the concept of GCD matrix. For example, Haukkanen \cite{H2} and Luque \cite{Lu} consider the determinants of multidimensional generalizations of GCD matrices and Hong, Zhou and Zhao \cite{Ho} study power GCD matrices for a unique factorization domain.

Poset theoretic generalizations of GCD matrices were first introduced by Lindström \cite{L}  and Wilf \cite{Wi}. In these generalizations $(P,\preceq)$ is a poset, $f$ is a function $P\to \Cset$, $S=\{x_1,x_2,\ldots,x_n\}\subset P$, $x_i\preceq x_j\Rightarrow i\leq j$ and $(S)_f$ is an $n\times n$ matrix with $f(x_i\wedge x_j)$ as its $ij$ element. These matrices are referred to as meet matrices. The papers by Lindström \cite{L} and Wilf \cite{Wi} arose from needs for combinatorics and became possible since Rota \cite{R} had previously developed his famous theory on Möbius functions. Rajarama Bhat \cite{RB} and Haukkanen \cite{H} were the first to investigate meet matrices systematically,  presenting many important properties of ordinary GCD matrices in terms of meet matrices. In \cite{KH} Korkee and Haukkanen define and study the join matrix $[S]_f$ of the set $S$ with respect to $f$, where $f(x_i\vee x_j)$ is the $ij$ element of the matrix $[S]_f$. 

During the last ten years the concept of meet matrix has been generalized even further in many different ways. Korkee \cite{K} studies the properties of a matrix $M_{S,f}^{\alpha,\beta,\gamma,\delta}$, which yields both the matrix $(S)_f$ and $[S]_f$ as its special case. A totally different approach is taken by Altinisik, Tuglu and Haukkanen in \cite{ATH}, when they define meet and join matrices on two subsets $X$ and $Y$ of $P$. A further idea of generalization is presented by Bege \cite{B} as he studies yet another GCD related matrix $(F(i,(i,j)))$, where $F(m,n)$ is an arithmetical function of two variables. For present purposes it is convenient to use a slightly different notation. For every $i\in\Zset^+$ we define an arithmetical function $f_i$ of one variable by
\begin{equation}\label{index}
f_i(m)=F(i,m)\quad\text{for all}\ m\in\Zset^+.
\end{equation}
With this notation Bege's matrix takes the form
\begin{equation}\label{matrix}
\left[ \begin{array}{cccc}
f_1((1,1)) & f_1((1,2)) & \cdots & f_1((1,n)) \\
f_2((2,1)) & f_2((2,2)) & \cdots & f_2((2,n)) \\
\vdots & \vdots & \ddots & \vdots \\
f_n((n,1)) & f_n((n,2)) & \cdots & f_n((n,n)) \\
\end{array} \right].
\end{equation}
In order to distinguish between this and the numerous other generalizations of GCD matrices, this matrix is referred to as the \emph{row-adjusted GCD matrix of the set} $\{1,2,\ldots,n\}$. This notation also enables us to define row-adjusted meet and join matrices.

\begin{definition}
Let $(P,\preceq)$ be a lattice, $S=\{x_1,x_2,\ldots,x_n\}$ be a finite subset of $P$ with $x_i\preceq x_j\Rightarrow i\leq j$ and $f_1,f_2,\ldots,f_n$ be complex-valued functions on $P$. The row-adjusted meet matrix of the set $S$ is the $n\times n$ matrix $(S)_{f_1,\ldots,f_n}$, which has $(f_i(x_i\wedge x_j))$ as its $ij$ element. Similarly, the row-adjusted join matrix $[S]_{f_1,\ldots,f_n}$ has $(f_i(x_i\vee x_j))$ as its $ij$ element.
\end{definition}

More explicitly,
\begin{equation}
(S)_{f_1,\ldots,f_n}=\left[ \begin{array}{cccc}
f_1(x_1\wedge x_1) & f_1(x_1\wedge x_2) & \cdots & f_1(x_1\wedge x_n) \\
f_2(x_2\wedge x_1) & f_2(x_2\wedge x_2) & \cdots & f_2(x_2\wedge x_n) \\
\vdots & \vdots & \ddots & \vdots \\
f_n(x_n\wedge x_1) & f_n(x_n\wedge x_2) & \cdots & f_n(x_n\wedge x_n) \\
\end{array} \right]
\end{equation}
and
\begin{equation}
[S]_{f_1,\ldots,f_n}=\left[ \begin{array}{cccc}
f_1(x_1\vee x_1) & f_1(x_1\vee x_2) & \cdots & f_1(x_1\vee x_n) \\
f_2(x_2\vee x_1) & f_2(x_2\vee x_2) & \cdots & f_2(x_2\vee x_n) \\
\vdots & \vdots & \ddots & \vdots \\
f_n(x_n\vee x_1) & f_n(x_n\vee x_2) & \cdots & f_n(x_n\vee x_n) \\
\end{array} \right].
\end{equation}
It turns out that there are some results concerning the matrix $(S)_{f_1,\ldots,f_n}$ to be found in the literature by Lindström \cite{L} and Luque \cite{Lu}. When the notation is the same as defined in \eqref{index}, these results can easily be applied to Bege's matrix.

Unlike the ordinary meet and join matrices, the matrices $(S)_{f_1,\ldots,f_n}$ and $[S]_{f_1,\ldots,f_n}$ are usually not symmetric. There are also many other key properties of meet and join matrices that do not hold for row-adjusted meet and join matrices. Hence, neither the traditional methods of meet and join matrices works in the study of these row-adjusted matrices.

\begin{remark}
In the case when $f_1=f_2=\cdots=f_n=f$, we have $(S)_{f_1,\ldots,f_n}=(S)_f$ and $[S]_{f_1,\ldots,f_n}=[S]_f$.
\end{remark}

\begin{remark}\label{column-adjusted}
Taking the transpose of a row-adjusted meet or join matrix results in a \emph{column-adjusted} meet or join matrix. Therefore the results concerning row-adjusted meet and join matrices can easily be translated for column-adjusted meet and join matrices using this connection.
\end{remark}

At the end of his paper Bege \cite{B} presents an open problem regarding the structure and the determinant of the matrix $(F(i,(i,j)))$. It appears that the question about the determinant could be solved using Lindström's result in \cite{L}. In this paper we present a more systematic investigation of the structure of $(S)_{f_1,\ldots,f_n}$ and $[S]_{f_1,\ldots,f_n}$ in general case. Then by using this knowledge we are able to find a different proof for Lindström's determinant formula and also prove some other results concerning the rank and inverse of these matrices.

\section{Preliminaries}

Let $(P,\preceq)$ be a lattice, $S=\{x_1,x_2,\ldots,x_n\}$ a finite subset of $P$ and 
\[f_1,f_2,\ldots,f_n:P\to\Cset\]
complex-valued functions on $P$. We also assume that the elements of $S$ are distinct and arranged so that
\[
x_i\preceq x_j\Rightarrow i\leq j.
\]
The set $S$ is said to be \emph{meet closed} if $x\wedge y\in S$ for all $x,y\in S$. In other words, the structure $(S,\preceq)$ is a meet semilattice. The concept of \emph{join closed set} is defined dually.

Let $D=\{d_1,d_2,\ldots,d_m\}$ be another subset of $P$ containing all the elements $x_i\wedge x_j$, $i,j=1,2,\ldots,n$, and having its elements arranged so that
\[
d_i\preceq d_j\Rightarrow i\leq j.
\]
Now for every $i=1,2,\ldots,n$ we define the function $\Psi_{D,f_i}$ on $D$ inductively as
\begin{equation}
\Psi_{D,f_i}(d_k)=f_i(d_k)-\sum_{d_v\prec d_k}\Psi_{D,f_i}(d_v),
\label{eq:Psi1}
\end{equation}
or equivalently
\begin{equation}
f_i(d_k)=\sum_{d_v\preceq d_k}\Psi_{D,f_i}(d_v).
\label{eq:Psi2}
\end{equation}
Thus we have
\begin{equation}
\Psi_{D,f_i}(d_k)=\sum_{d_v\preceq d_k}f_i(d_v)\mu_D(d_v,d_k),
\label{eq:Psi3}
\end{equation}
where $\mu_D$ is the Möbius function of the poset $(D,\preceq)$, see \cite[Section IV.1]{Aig} and \cite[3.7.1 Proposition.]{St}.

Let $E_D$ be the $n\times m$ matrix defined as
\begin{equation}\label{eq:E}
(e_D)_{ij}=\left\{
 \begin{array}{cc}
    1 & \textrm{if }d_{j}\preceq x_{i}\textrm{,} \\
    0 & \textrm{otherwise.}
 \end{array}
\right.
\end{equation}
The matrix $E_D$ may be referred to as the incidence matrix of the set $D$ with respect to the set $S$ and the partial ordering $\preceq$.

Finally, we need another $n\times m$ matrix $\Upsilon=(\upsilon_{ij})$, where
\begin{equation}\label{eq:Phi}
\upsilon_{ij}=(e_D)_{ij}\Psi_{D,f_i}(d_j).
\end{equation}
In other words, if $\Xi$ is the $n\times m$ matrix having $\Psi_{D,f_i}(d_j)$ as its $ij$ element, then $\Upsilon=E_D\circ\Xi$, the Hadamard product of the matrices $E_D$ and $\Xi$.

\section{A structure theorem}

In this section we give a factorization of the matrix $(S)_{f_1,\ldots,f_n}$, which then enables us to derive formulas for the rank, the determinant and the inverse of the matrix $(S)_{f_1,\ldots,f_n}$.

\begin{theorem}\label{th:meet.fac}
We have
\begin{equation}\label{eq:meet.fac}
(S)_{f_1,\ldots,f_n}=\Upsilon E_D^T=(E_D\circ\Xi)E_D^T.
\end{equation}
\end{theorem}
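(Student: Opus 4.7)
The plan is to verify the factorization entrywise: compute the $(i,j)$ entry of the product $\Upsilon E_D^T$ and show it equals $f_i(x_i\wedge x_j)$.

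First, I would unpack the matrix multiplication. By definition of $\Upsilon$ in \eqref{eq:Phi},
\[
(\Upsilon E_D^T)_{ij}=\sum_{k=1}^m \upsilon_{ik}(e_D)_{jk}=\sum_{k=1}^m (e_D)_{ik}(e_D)_{jk}\,\Psi_{D,f_i}(d_k).
\]
The key observation is that the product of incidence entries $(e_D)_{ik}(e_D)_{jk}$ equals $1$ precisely when $d_k\preceq x_i$ and $d_k\preceq x_j$, i.e.\ when $d_k\preceq x_i\wedge x_j$, and is $0$ otherwise. Hence
\[
(\Upsilon E_D^T)_{ij}=\sum_{\substack{d_k\in D\\ d_k\preceq x_i\wedge x_j}}\Psi_{D,f_i}(d_k).
\]

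Next I would invoke the defining property of $D$: since $D$ was required to contain all meets $x_i\wedge x_j$, the element $x_i\wedge x_j$ itself belongs to $D$, so it appears as some $d_{k_0}$. Applying the inversion identity \eqref{eq:Psi2} at $d_{k_0}=x_i\wedge x_j$ gives
\[
\sum_{d_v\preceq x_i\wedge x_j}\Psi_{D,f_i}(d_v)=f_i(x_i\wedge x_j),
\]
which matches the $(i,j)$ entry of $(S)_{f_1,\ldots,f_n}$. This establishes \eqref{eq:meet.fac}, and the alternative form $(E_D\circ\Xi)E_D^T$ is immediate from the definition of $\Upsilon$ as the Hadamard product $E_D\circ\Xi$.

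There is no real obstacle here; the argument is essentially bookkeeping. The only point that deserves care is confirming that $x_i\wedge x_j\in D$ so that \eqref{eq:Psi2} applies directly at that element, and that the inequality $d_k\preceq x_i\wedge x_j$ (in $P$) is equivalent to $d_k$ being $\preceq$ both $x_i$ and $x_j$, which is of course just the universal property of the meet. Since $D$ is allowed to be any superset of $\{x_i\wedge x_j\}_{i,j}$ ordered compatibly, the argument is independent of the particular enlargement $D$ chosen.
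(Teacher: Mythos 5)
Your proof is correct and is essentially the paper's own argument: both reduce the claim to the entrywise identity $f_i(x_i\wedge x_j)=\sum_{d_v\preceq x_i\wedge x_j}\Psi_{D,f_i}(d_v)=\sum_{k=1}^m (e_D)_{ik}\Psi_{D,f_i}(d_k)(e_D)_{jk}$ via \eqref{eq:Psi2}, \eqref{eq:E} and \eqref{eq:Phi}. Your explicit remarks that $x_i\wedge x_j\in D$ and that $(e_D)_{ik}(e_D)_{jk}=1$ iff $d_k\preceq x_i\wedge x_j$ are just the details the paper leaves tacit.
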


\begin{proof}
By \eqref{eq:Psi2}, \eqref{eq:E} and \eqref{eq:Phi} the $ij$ element of $(S)_{f_1,\ldots,f_n}$ is
\begin{equation}
f(x_i\wedge x_j)=\sum_{d_v\preceq x_i\wedge x_j}\Psi_{D,f_i}(d_v)=\sum_{k=1}^m (e_D)_{ik}\Psi_{D,f_i}(e_D)_{jk},
\end{equation}
which is the $ij$ element of the matrix $\Upsilon E_D^T$.
\end{proof}

\begin{remark}
It is possible to define  row-adjusted meet and join matrices $(X,Y)_{f_1,\ldots,f_n}$ and $[X,Y]_{f_1,\ldots,f_n}$ on two sets $X$ and $Y$ by $((X,Y)_{f_1,\ldots,f_n})_{ij}=f_i(x_i\wedge y_j)$ and $([X,Y]_{f_1,\ldots,f_n})_{ij}=f_i(x_i\vee y_j)$. It would be possible to generalize Theorem \ref{th:meet.fac} for these matrices, but the methods used in the proofs of the other theorems do not work in this general case.
\end{remark}

\begin{remark}\label{re:calc}
In the case when the set $S$ is meet closed Theorem \ref{th:meet.fac} also provides an effective way to calculate all the necessary values $\Psi_{S,f_i}(x_j)$ as follows. In this case $D=S$ and both $E_S$ and $\Upsilon$ are square matrices of size $n\times n$. Since $E_S$ is also invertible, from equation \eqref{eq:meet.fac} we obtain
\begin{equation}
\Upsilon=(S)_{f_1,\ldots,f_n} (E_S^T)^{-1},
\end{equation}
which gives the values of $\Psi_{S,f_i}(x_j)$. Here the matrix $E_S^T$ is the matrix associated with the zeta function $\zeta_S$ of the set $S$ (see \cite[p. 139]{Aig}), and thus the matrix $(E_S^T)^{-1}$ is the matrix of the Möbius function of the set $S$ and has $\mu_S(x_i,x_j)$ as its $ij$ element.
\end{remark}

The following example gives a solution for the first part of Bege's problem.

\begin{example}
The row-adjusted GCD matrix of the set $S=\{1,2,\ldots,n\}$ is the product of the matrices $\Upsilon=(\upsilon_{ij})$ and $E_S^T$, where
\begin{equation}
(e_S)_{ij}=\left\{
 \begin{array}{cc}
    1 & \textrm{if }j\,|\,i\textrm{,} \\
    0 & \textrm{otherwise}
 \end{array}
\right.
\end{equation}
and
\begin{equation}
\upsilon_{ij}=(e_S)_{ij}\Psi_{S,f_i}(j)=(e_S)_{ij}\sum_{k\,|\,j}f_i(k)\mu\left(\frac{j}{k}\right)=(e_S)_{ij}(f_i\ast\mu)(j),
\end{equation}
where $\ast$ is the Dirichlet convolution and $\mu$ is the number-theoretic Möbius function. It should be noted that here the notation $F(i,k)=f_i(k)$ is not only convenient but also enables the use of the Dirichlet convolution.

\end{example}

\section{Rank estimations}

In this section we derive bounds for $\text{rank}\,(S)_{f_1,\ldots,f_n}$ in the case when the set $S$ is meet closed. The rank of  meet and join matrices or even GCD and LCM matrices has not been studied earlier in the literature. 

\begin{theorem}\label{th:rank}
Let $S$ be a meet closed set and let $k$ be the number of indices $i$ with $\Psi_{D,f_i}(x_i)= 0$. Then the following properties hold.
\begin{enumerate}
\item $\mathrm{rank}\,(S)_{f_1,\ldots,f_n}=0$ iff $f_i(x_i\wedge x_j)=0$ for all $i,j=1,\ldots,n$.
\item If $k=0$, then $\mathrm{rank}\,(S)_{f_1,\ldots,f_n}=n$.
\item If $k>0$, then 
\begin{equation}
n-k\leq\mathrm{rank}\,(S)_{f_1,\ldots,f_n}\leq n-1.
\end{equation}
\end{enumerate}
\end{theorem}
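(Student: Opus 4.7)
The plan is to exploit the factorization of Theorem~\ref{th:meet.fac}. Since $S$ is meet closed we take $D=S$, and then the convention $x_i\preceq x_j\Rightarrow i\leq j$ forces $E_S$ to be an $n\times n$ lower triangular matrix with $1$'s on the diagonal, hence invertible. Consequently $(S)_{f_1,\ldots,f_n}=\Upsilon E_S^T$ has the same rank as $\Upsilon$, and $\Upsilon=E_S\circ\Xi$ inherits the lower triangular support of $E_S$, with diagonal entries $\Upsilon_{ii}=\Psi_{S,f_i}(x_i)$.

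Once this structural observation is in place, the three claims fall out quickly. For (2), if $k=0$ then $\det\Upsilon=\prod_{i=1}^{n}\Psi_{S,f_i}(x_i)\neq 0$, so $\Upsilon$ has full rank $n$. For (3), the same product vanishes when $k>0$, giving the upper bound $\mathrm{rank}\,\Upsilon\leq n-1$. For the lower bound, let $I=\{i_1<\cdots<i_{n-k}\}$ collect the indices with $\Psi_{S,f_i}(x_i)\neq 0$; the principal submatrix of $\Upsilon$ indexed by $I\times I$ is still lower triangular (a submatrix of a triangular matrix taken on matched row/column indices) and has no zero entry on its diagonal, hence is nonsingular, yielding $\mathrm{rank}\,\Upsilon\geq |I|=n-k$. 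Finally, (1) is the tautology that a matrix has rank $0$ exactly when every entry vanishes, which is the condition $f_i(x_i\wedge x_j)=0$ for all $i,j$.

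There is no genuine obstacle once Theorem~\ref{th:meet.fac} is applied with $D=S$: every assertion reduces to the triangularity of $\Upsilon$ together with the invertibility of $E_S^T$. The only subtlety worth flagging is that all of this hinges on the ordering convention on $S$; without it neither $E_S$ nor $\Upsilon$ would be triangular and the diagonal entries $\Psi_{S,f_i}(x_i)$ would no longer govern the rank so cleanly. Translating to the dual setting (join closed $S$) will then run mutatis mutandis, with $E_S$ replaced by its upper triangular analogue.
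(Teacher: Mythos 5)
Your proof is correct and follows essentially the same route as the paper: apply Theorem~\ref{th:meet.fac} with $D=S$, note that $E_S$ is unitriangular and $\Upsilon$ is lower triangular with diagonal entries $\Psi_{S,f_i}(x_i)$, and read off the rank statements from this. Your lower-bound argument via a nonsingular principal submatrix indexed by the nonzero diagonal positions is just a minor repackaging of the paper's observation that the corresponding $n-k$ rows of $\Upsilon$ are linearly independent.
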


\begin{proof}
\begin{enumerate}
\item Follows trivially.
\item
By Theorem \ref{th:meet.fac} we have
\begin{equation}
\mathrm{rank}\,(S)_{f_1,\ldots,f_n}=\mathrm{rank}\,\left(\Upsilon E_S^T\right).
\end{equation}
Since in this case the matrices $\Upsilon$ and $E_S$ are both triangular square matrices with full rank, the claim follows immediately.
\item
Since multiplying with the invertible matrix $E_S^T$ does not change the rank, we have 
\begin{equation}
\mathrm{rank}\,(S)_{f_1,\ldots,f_n}=\mathrm{rank}\,\Upsilon.
\end{equation}
To obtain the latter inequality we only need to note that since at least one of the diagonal elements of $\Upsilon$ equals zero, the rows of $\Upsilon$ cannot be linearly independent and thereby $\Upsilon$ cannot have a full rank. On the other hand, the $n-k$ rows with nonzero diagonal elements constitute a linearly independent set, from which we obtain the first inequality.
\end{enumerate}
\end{proof}

In the case when the set $S$ is meet closed and $f_1=\cdots=f_n=f$ (that is in the case of ordinary meet matrix) the question of the rank becomes trivial. Namely, the matrix $(S)_f$ can be written as
\begin{equation}
(S)_f=E_S\Lambda E_S^T,
\end{equation}
where $\Lambda=\mathrm{diag}\,(\Psi_{S,f}(x_1),\Psi_{S,f}(x_2),\ldots,\Psi_{S,f}(x_n))$, see \cite[Theorem 3.1.]{ATH}. Now by the same argument as in the proof of Theorem \ref{th:rank} we have
\begin{equation}
\mathrm{rank}\,(S)_f=\mathrm{rank}\,\Lambda=n-k.
\end{equation}
The following two examples show that the bounds in Theorem \ref{th:rank} are the best possible under these assumptions. They also show that a large value of $k$ may indicate a large decline of the rank of the row-adjusted meet matrix, but not necessarily.

\begin{example}
Let $x_1=x_i\wedge x_j$ for all $i,j=1,\ldots,n$, which implies that $x_1$ is the smallest element of $S$ and the set $S\backslash \{x_1\}$ is an antichain. Now the set $S$ is clearly meet closed, and for every $i=2,\ldots,n$ we have
\begin{equation}
\Psi_{S,f_i}(x_i)=f_i(x_i)-f_i(x_1).
\end{equation}
If $i>1$ and we set $f_i(x_i)=f_i(x_1)$, then the $i$th column of $\Upsilon$ becomes the zero vector and thus for every $i>1$ we may reduce the rank of the matrix $(S)_{f_1,\ldots,f_n}$ by one. Therefore if the first diagonal element of $\Upsilon$ is not zero, then $\mathrm{rank}\,(S)_{f_1,\ldots,f_n}=n-k$.
\end{example}

\begin{example}
Let $(P,\preceq)=\mathcal{N}_5$ and $S=P$ as shown in Figure \ref{fig: kuva}. Let
\begin{equation}
f_2(x_2)=f_3(x_1)=f_3(x_3)=f_4(x_3)=f_4(x_4)=f_5(x_4)=f_5(x_5)=1
\end{equation}
and $f_i(x_j)=0$ otherwise. Simple calculations show that 
$\Psi_{S,f_2}(x_2)=1\neq 0$,
\begin{equation}
\Psi_{S,f_1}(x_1)=\Psi_{S,f_3}(x_3)=\Psi_{S,f_4}(x_4)=\Psi_{S,f_5}(x_5)=0,
\end{equation}
and thereby $k=4$. But on the other hand we have
\begin{equation}
(S)_{f_1,\ldots,f_n}=\left[ \begin{array}{ccccc}
0 & 0 & 0 & 0 & 0 \\
0 & 1 & 0 & 0 & 1 \\
1 & 1 & 1 & 1 & 1 \\
0 & 0 & 1 & 1 & 1 \\
0 & 0 & 0 & 1 & 1
\end{array} \right],
\end{equation}
and clearly $\mathrm{rank}\,(S)_{f_1,\ldots,f_n}=4$.
\end{example}

\begin{figure}[ht]
\centering
\setlength{\unitlength}{0.7cm}
\begin{picture}(7,8)
\thicklines
\put(4,1){\line(-1,1){3}}
\put(4,1){\line(1,1){2}}
\put(6,3){\line(0,1){2}}
\put(1,4){\line(1,1){3}}
\put(6,5){\line(-1,1){2}}
\put(0.1,4){$x_2$}
\put(4,0.5){$x_1$}
\put(6.5,3){$x_3$}
\put(6.5,5){$x_4$}
\put(4,7.5){$x_5$}
\put(4,1){\circle*{0.2}}
\put(6,3){\circle*{0.2}}
\put(1,4){\circle*{0.2}}
\put(4,7){\circle*{0.2}}
\put(6,5){\circle*{0.2}}

\end{picture}
\caption{The lattice $\mathcal{N}_5$ and the choices of the elements of the set $S$.}
\label{fig: kuva}
\end{figure}
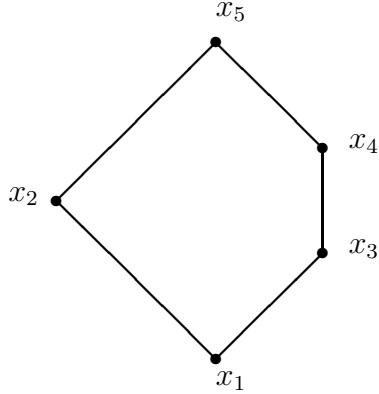

\section{Determinant formula}

In this section we present a determinant formula for the matrix $(S)_{f_1,\ldots,f_n}$ when the set $S$ is meet closed. This theorem is almost the same as that presented by Lindström \cite{L}. It is possible to use the Cauchy-Binet equality to obtain a determinant formula for $(S)_{f_1,\ldots,f_n}$ in general case. Since it is similar to the case of usual meet matrix, we do not present it here.

\begin{theorem}[\cite{L}, Theorem]\label{th:det}
If the set $S$ is meet closed, then
\begin{equation}\label{eq:det}
\det (S)_{f_1,\ldots,f_n}=\prod_{i=1}^n \Psi_{S,f_i}(x_i)=\prod_{i=1}^n
                 \sum_{x_j\preceq x_i}f_i(x_j)\mu_S(x_j,x_i).
\end{equation}
\end{theorem}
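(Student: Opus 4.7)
The plan is to reduce everything to the factorization in Theorem \ref{th:meet.fac}. Since $S$ is meet closed, we may take $D=S$, so that $E_D=E_S$ and $\Upsilon$ are both $n\times n$. Then \eqref{eq:meet.fac} gives
\[
(S)_{f_1,\ldots,f_n}=\Upsilon E_S^T,
\]
and the determinant factors as $\det\Upsilon\cdot\det E_S^T$.

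Next I would observe that both factors are triangular. The ordering condition $x_i\preceq x_j\Rightarrow i\leq j$ together with the definition \eqref{eq:E} forces $(e_S)_{ij}=0$ whenever $j>i$: for if $j>i$ and $x_j\preceq x_i$ then the ordering axiom would yield $j\leq i$, a contradiction. Hence $E_S$ is lower triangular, and $(e_S)_{ii}=1$ because $x_i\preceq x_i$, so $\det E_S=\det E_S^T=1$. By \eqref{eq:Phi}, $\upsilon_{ij}=(e_S)_{ij}\,\Psi_{S,f_i}(x_j)$, so $\Upsilon$ inherits the lower triangular pattern of $E_S$, with diagonal entries $\upsilon_{ii}=\Psi_{S,f_i}(x_i)$. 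Therefore
\[
\det\Upsilon=\prod_{i=1}^{n}\Psi_{S,f_i}(x_i),
\]
and multiplying by $\det E_S^T=1$ gives the first equality of \eqref{eq:det}.

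The second equality in \eqref{eq:det} is simply the M\"obius inversion formula \eqref{eq:Psi3} applied at $d_k=x_i$, with $D=S$, so no further work is required.

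There is no real obstacle here once Theorem \ref{th:meet.fac} is in hand; the only point to be careful about is verifying that the ordering assumption on $S$ actually forces $E_S$ (and hence $\Upsilon$) to be triangular, rather than merely permutable to triangular form, so that the determinants read off the diagonals without any sign corrections.
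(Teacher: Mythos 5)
Your proposal is correct and follows essentially the same route as the paper: take $D=S$, use the factorization $(S)_{f_1,\ldots,f_n}=\Upsilon E_S^T$ from Theorem \ref{th:meet.fac}, note that $E_S$ and $\Upsilon$ are lower triangular with diagonals $1$ and $\Psi_{S,f_i}(x_i)$ respectively, and invoke \eqref{eq:Psi3} for the second equality. Your extra remark verifying that the ordering assumption genuinely forces triangularity (rather than triangularity up to permutation) is a correct and welcome elaboration of a step the paper states without proof.
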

\begin{proof}
Since the set $S$ is meet closed, we have $D=S$. Then the matrix $E_S$ is a lower triangular square matrix having every main diagonal element equal to $1$. The matrix $\Upsilon$ is a lower triangular square matrix with $\Psi_{S,f_1}(x_1), \Psi_{S,f_2}(x_2),\ldots,\Psi_{S,f_n}(x_n)$ as diagonal elements. Thus $\det E_S=1$ and by Theorem \ref{th:meet.fac} we have
\begin{equation}
\det (S)_{f_1,\ldots,f_n}=\det \Upsilon=\prod_{i=1}^n \Psi_{S,f_i}(x_i).
\end{equation}
The second equality follows from \eqref{eq:Psi3}.

\end{proof}

\begin{remark}
The original theorem by Lindström \cite{L} is slightly more general since it does not require the assumption $x_i\preceq x_j\Rightarrow i\leq j$. As he states, the rows and columns of $(S)_{f_1,\ldots,f_n}$ can always be permuted in a way that does not change the determinant but makes the matrix $(S)_{f_1,\ldots,f_n}$ to fulfill this condition. 

\end{remark}

The following example gives a solution to the second part of Bege's problem.

\begin{example}
For the row-adjusted GCD matrix on the set $S=\{1,2,\ldots,n\}$ we have
\begin{equation}
\det(\{1,2,\ldots,n\})_{f_1,\ldots,f_n}=\prod_{i=1}^n \Psi_{S,f_i}(i)=\prod_{i=1}^n\sum_{j\,|\,i}f_i(j)\mu\left(\frac{i}{j}\right)=\prod_{i=1}^n(f_i\ast\mu)(i).
\end{equation}
\end{example}

\section{Inverse formula}

In this section we study the inverse of the matrix $(S)_{f_1,\ldots,f_n}$ when the set $S$ is meet closed. A formula for $(S)_{f_1,\ldots,f_n}^{-1}$ in general case could be obtained with the aid of meet matrices on two sets and the Cauchy-Binet equation. We do not, however, present the details here.

\begin{theorem}\label{th:inv}
If the set $S$ is meet closed, then the matrix $(S)_{f_1,\ldots,f_n}$ is invertible iff $\Psi_{S,f_i}(x_i)\neq 0$ for all $i=1,\ldots,n$. Furthermore, in this case the inverse of $(S)_{f_1,\ldots,f_n}$ is the $n\times n$ matrix $B=(b_{ij})$ with
\begin{equation}\label{eq:inv}
b_{ij}=\sum_{k=j}^n\mu_S(x_i,x_k)\theta_{kj},
\end{equation}
where the numbers $\theta_{jj},\theta_{j+1,j},\ldots,\theta_{nj}$ are defined recursively as
\begin{equation}\label{eq:theta}
\theta_{kj}=\left\{
 \begin{array}{cc}
    \frac{\displaystyle 1}{\displaystyle \Psi_{S,f_j}(x_j)} & \textrm{if }k=j\textrm{,} \\
    -\frac{\displaystyle 1}{\displaystyle \Psi_{S,f_k}(x_k)}{\displaystyle \sum_{u=j}^{k-1}e_{ku}\Psi_{S,f_k}(x_u)\theta_{uj}} & \textrm{if }k>j.
 \end{array}
\right.
\end{equation}
\end{theorem}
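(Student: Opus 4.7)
The plan is to leverage the factorization $(S)_{f_1,\ldots,f_n}=\Upsilon E_S^T$ from Theorem \ref{th:meet.fac} (applied with $D=S$, which is valid since $S$ is meet closed). First I would observe that under our ordering convention $x_i\preceq x_j\Rightarrow i\leq j$, the incidence matrix $E_S$ is lower triangular with all diagonal entries equal to $1$, hence $\det E_S=1$ and $E_S$ is invertible. Similarly $\Upsilon$ is lower triangular, with diagonal entries $\upsilon_{ii}=(e_S)_{ii}\Psi_{S,f_i}(x_i)=\Psi_{S,f_i}(x_i)$. Consequently $(S)_{f_1,\ldots,f_n}$ is invertible if and only if $\Upsilon$ is, which happens precisely when every $\Psi_{S,f_i}(x_i)\neq 0$.

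For the formula, I would write $(S)_{f_1,\ldots,f_n}^{-1}=(E_S^T)^{-1}\Upsilon^{-1}$. The matrix $E_S^T$ is exactly the zeta matrix of $(S,\preceq)$ (as noted in Remark \ref{re:calc}), so $(E_S^T)^{-1}$ has $\mu_S(x_i,x_j)$ as its $ij$ element. Setting $\theta_{kj}:=(\Upsilon^{-1})_{kj}$ and multiplying, the $ij$ entry of the inverse becomes
\begin{equation*}
b_{ij}=\sum_{k=1}^n \mu_S(x_i,x_k)\theta_{kj}.
\end{equation*}
Since $\Upsilon$ is lower triangular, so is $\Upsilon^{-1}$, hence $\theta_{kj}=0$ for $k<j$. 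This collapses the sum to $k$ running from $j$ to $n$, matching \eqref{eq:inv}.

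It remains to derive the recursion \eqref{eq:theta} for $\theta_{kj}$. I would obtain this directly from the identity $\Upsilon\Upsilon^{-1}=I$: the $(k,j)$ entry gives $\sum_{u}(e_S)_{ku}\Psi_{S,f_k}(x_u)\theta_{uj}=\delta_{kj}$. Using $\theta_{uj}=0$ for $u<j$ and $(e_S)_{ku}=0$ for $u>k$, this sum ranges only over $j\leq u\leq k$. Isolating the $u=k$ term (where $(e_S)_{kk}=1$) and solving for $\theta_{kj}$ yields $\theta_{jj}=1/\Psi_{S,f_j}(x_j)$ when $k=j$, and the recursive expression for $k>j$ given in \eqref{eq:theta}.

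The only delicate point is bookkeeping the triangularity and support of the various matrices so that the sums truncate correctly, together with interpreting $E_S^T$ as the zeta matrix so that its inverse carries the M�bius function values of $(S,\preceq)$; once these identifications are made the argument is essentially forced.
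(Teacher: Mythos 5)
Your proposal is correct and follows essentially the same route as the paper: factor $(S)_{f_1,\ldots,f_n}=\Upsilon E_S^T$, invert to get $(E_S^T)^{-1}\Upsilon^{-1}$, identify $(E_S^T)^{-1}$ with the M\"obius matrix of $(S,\preceq)$, and read off the recursion for $\theta_{kj}$ from $\Upsilon\Upsilon^{-1}=I$ using triangularity. The only cosmetic difference is that the paper deduces the invertibility criterion from the determinant formula (Theorem \ref{th:det}) while you argue directly from the triangular form of $\Upsilon$; these are equivalent.
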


\begin{proof}
The first part follows directly from Theorem \ref{th:det}. To prove the second part we use Theorem \ref{th:meet.fac} and we obtain
\begin{equation}
(S)_{f_1,\ldots,f_n}^{-1}=(E_S^T)^{-1}\Upsilon^{-1}.
\end{equation}
In order to obtain the $ij$ element of the matrix $(S)_{f_1,\ldots,f_n}^{-1}$ we only have to ascertain the $i$th row of $(E_S^T)^{-1}$ and the $j$th column of $\Upsilon^{-1}$.
As stated in Remark \ref{re:calc}, the matrix $(E_S^T)^{-1}$ is the matrix associated with the Möbius function of the set $S$. Therefore its $i$th row is

\begin{equation}\label{eq:Möbius}
\left[0\ \ldots\ 0\ \underbrace{\mu_S(x_i,x_i)}_{=1}\ \mu_S(x_i,x_{i+1})\ \ldots\ \mu_S(x_i,x_n)\right].
\end{equation}

Now let $\Theta=(\theta_{ij})$ denote the inverse of $\Upsilon$. By multiplying the $j$th row of $\Upsilon$ with the $j$th column of $\Theta$, we obtain
\begin{equation}
\Psi_{S,f_j}(x_j)\theta_{jj}=1.
\label{eq:inv1}
\end{equation}
Further, the multiplication of the $k$th row of $\Upsilon$ and the $j$th column of $\Theta$ results in
\begin{equation}
\sum_{u=j}^ke_{ku}\Psi_{S,f_k}(x_u)\theta_{uj}=0.
\label{eq:inv2}
\end{equation}
Thus we obtain \eqref{eq:theta}, and \eqref{eq:inv} follows when we multiply the matrices $\Theta$ and $(E_S^T)^{-1}$.

\end{proof}

\section{Formulas for row-adjusted join matrices}

In this section the results presented in previous sections are translated for row-adjusted join matrices. The proofs of these dual theorems are omitted for the sake of brevity. Row-adjusted join matrices (or even row-adjusted LCM matrices) have not previously been studied in the literature. As stated in Remark \ref{column-adjusted}, the study of column-adjusted join matrices can easily be reverted to the study of row-adjusted join matrices via taking the transpose.

Let $D'=\{d_1',d_2',\ldots,d_{m'}'\}$ be a subset of $P$ containing all the elements $x_i\vee x_j$, $i,j=1,2,\ldots,n$, and having its elements arranged so that
\[
d_i'\preceq d_j'\Rightarrow i\leq j.
\]
For every $i=1,2,\ldots,n$ we define the function $\Psi_{D',f_i}'$ on $D'$ inductively as
\begin{equation}
\Psi_{D',f_i}'(d_k')=f_i(d_k')-\sum_{d_k'\prec d_v'}\Psi_{D',f_i}'(d_v'),
\label{eq:Psi1'}
\end{equation}
or equivalently
\begin{equation}
f_i(d_k')=\sum_{d_k'\preceq d_v'}\Psi_{D',f_i}'(d_v').
\label{eq:Psi2'}
\end{equation}
Thus we have
\begin{equation}
\Psi_{D',f_i}'(d_k')=\sum_{d_k'\preceq d_v'}f_i(d_v')\mu_{D'}(d_k',d_v'),
\label{eq:Psi3'}
\end{equation}
where $\mu_{D'}$ is the Möbius function of the poset $(D',\preceq)$, see \cite[3.7.2 Proposition.]{St}.

Let $E_{D'}'$ be the $n\times m'$ matrix defined as
\begin{equation}\label{eq:E'}
(e_{D'}')_{ij}=\left\{
 \begin{array}{cc}
    1 & \textrm{if }x_{i}\preceq d_{j}'\textrm{,} \\
    0 & \textrm{otherwise.}
 \end{array}
\right.
\end{equation}

Finally, let $\Upsilon'=(\upsilon_{ij}')$ be the $n\times m'$ matrix, where
\begin{equation}\label{eq:Phi'}
\upsilon_{ij}'=(e_{D'}')_{ij}\Psi_{D',f_i}'(d_j').
\end{equation}

\begin{theorem}\label{th:join1}
\begin{equation}
[S]_{f_1,\ldots,f_n}=\Upsilon'(E_{D'}')^T.
\label{eq:join1}
\end{equation}

\end{theorem}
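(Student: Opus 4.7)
The plan is to mirror the proof of Theorem \ref{th:meet.fac}, replacing the meet-side identities by their join-side duals. I would start from the defining relation \eqref{eq:Psi2'}, which expresses $f_i$ at any point of $D'$ as a sum of $\Psi'_{D',f_i}$-values over the upper set of that point. Applied at $d_k' = x_i \vee x_j$ (which lies in $D'$ by assumption on $D'$), this gives
\begin{equation*}
f_i(x_i \vee x_j) \;=\; \sum_{x_i \vee x_j \,\preceq\, d_v'} \Psi'_{D',f_i}(d_v').
\end{equation*}

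Next I would use the universal property of the join: $x_i \vee x_j \preceq d_v'$ if and only if both $x_i \preceq d_v'$ and $x_j \preceq d_v'$. Consequently the indicator of the condition $x_i \vee x_j \preceq d_v'$ factors as $(e_{D'}')_{iv}(e_{D'}')_{jv}$, and the displayed sum can be rewritten as
\begin{equation*}
f_i(x_i \vee x_j) \;=\; \sum_{v=1}^{m'} (e_{D'}')_{iv}\,\Psi'_{D',f_i}(d_v')\,(e_{D'}')_{jv} \;=\; \sum_{v=1}^{m'} \upsilon'_{iv}\,(e_{D'}')_{jv},
\end{equation*}
using the definition \eqref{eq:Phi'} of $\upsilon'_{ij}$. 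The right-hand side is precisely the $ij$ element of $\Upsilon'(E_{D'}')^{T}$, so \eqref{eq:join1} follows.

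There is no genuine obstacle here; the only step worth highlighting is the join-factorization of the indicator, which is the exact dual of what is implicit in the proof of Theorem \ref{th:meet.fac} (where $d_v \preceq x_i \wedge x_j$ factors as $d_v \preceq x_i$ and $d_v \preceq x_j$). The rest is bookkeeping to match the entries of the matrix product. This is why the authors were content to omit the proof.
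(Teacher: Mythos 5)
Your proof is correct and is exactly the argument the paper intends: the paper omits the proof as the dual of Theorem \ref{th:meet.fac}, and your use of \eqref{eq:Psi2'} together with the factorization of the condition $x_i\vee x_j\preceq d_v'$ into $x_i\preceq d_v'$ and $x_j\preceq d_v'$ mirrors that proof step for step. Nothing is missing.
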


\begin{theorem}\label{th:joinrank}
Let $S$ be a join closed set and let $k$ be the number of indices $i$ with $\Psi_{D',f_i}'(x_i)= 0$. Then the following properties hold.
\begin{enumerate}
\item $\mathrm{rank}\,[S]_{f_1,\ldots,f_n}=0$ iff $f_i(x_i\vee x_j)=0$ for all $i,j=1,\ldots,n$.
\item If $k=0$, then $\mathrm{rank}\,[S]_{f_1,\ldots,f_n}=n$.
\item If $k>0$, then 
\begin{equation}
n-k\leq\mathrm{rank}\,[S]_{f_1,\ldots,f_n}\leq n-1.
\end{equation}
\end{enumerate}
\end{theorem}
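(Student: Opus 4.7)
The plan is to mirror the proof of Theorem \ref{th:rank} verbatim in its dual form, exploiting the factorization $[S]_{f_1,\ldots,f_n}=\Upsilon'(E_{D'}')^T$ from Theorem \ref{th:join1}. When $S$ is join closed we have $D'=S$, so $(E_{S'}')$ and $\Upsilon'$ are both $n\times n$ matrices; and because the indexing satisfies $x_i\preceq x_j\Rightarrow i\leq j$, the matrix $E_{S'}'$ is now \emph{upper} triangular (the roles of rows and columns being swapped relative to $E_S$) with all diagonal entries equal to $1$, hence invertible. Similarly $\Upsilon'$ is upper triangular with diagonal entries $\Psi_{S,f_1}'(x_1),\ldots,\Psi_{S,f_n}'(x_n)$.

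Part (1) is immediate: $\mathrm{rank}\,[S]_{f_1,\ldots,f_n}=0$ is equivalent to $[S]_{f_1,\ldots,f_n}$ being the zero matrix, i.e.\ $f_i(x_i\vee x_j)=0$ for all $i,j$. For part (2), when $k=0$ every diagonal entry of $\Upsilon'$ is nonzero, so the upper triangular matrix $\Upsilon'$ has full rank; multiplication by the invertible matrix $(E_{S'}')^T$ preserves this, giving $\mathrm{rank}\,[S]_{f_1,\ldots,f_n}=n$.

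For part (3), since $(E_{S'}')^T$ is invertible we have $\mathrm{rank}\,[S]_{f_1,\ldots,f_n}=\mathrm{rank}\,\Upsilon'$. The upper bound follows because $\Upsilon'$ is triangular and at least one diagonal entry vanishes, forcing $\det\Upsilon'=0$ and hence $\mathrm{rank}\,\Upsilon'\leq n-1$. For the lower bound, let $i_1<\cdots<i_{n-k}$ be the indices with $\Psi_{S,f_{i_\ell}}'(x_{i_\ell})\neq 0$; the submatrix of $\Upsilon'$ obtained by selecting rows and columns $i_1,\ldots,i_{n-k}$ is upper triangular with nonzero diagonal, hence invertible, so the corresponding rows of $\Upsilon'$ are linearly independent, giving $\mathrm{rank}\,\Upsilon'\geq n-k$.

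There is no real obstacle here: the only point requiring care is the orientation of the triangularity. In the meet case the convention $x_i\preceq x_j\Rightarrow i\leq j$ makes $E_S$ lower triangular, while in the join case the analogous incidence matrix $E_{S'}'$ encodes $x_i\preceq d_j'$, which puts the ones on and above the diagonal. Once this is observed, the triangular rank argument of Theorem \ref{th:rank} transfers without modification.
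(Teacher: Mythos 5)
Your proof is correct and is exactly the dualization of the paper's proof of Theorem \ref{th:rank} (the paper omits the dual proofs, which is precisely this argument): with $D'=S$ the matrices $E_S'$ and $\Upsilon'$ become upper triangular, rank is preserved under multiplication by the invertible $(E_S')^T$, and the triangular-diagonal argument gives both bounds. The only nitpick is notational ($E_{S'}'$ should read $E_S'$); the mathematics, including the careful justification of the lower bound via the invertible principal submatrix, is fine.
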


\begin{theorem}\label{th:joindet}
If the set $S$ is join closed, then
\begin{equation}\label{eq:joindet}
\det [S]_{f_1,\ldots,f_n}=\prod_{i=1}^n \Psi_{S,f_i}'(x_i)=\prod_{i=1}^n
                 \sum_{x_i\preceq x_j}f_i(x_j)\mu_S(x_i,x_j).
\end{equation}
\end{theorem}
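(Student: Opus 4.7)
The plan is to mirror the proof of Theorem \ref{th:det} in the join setting, using the dual factorization supplied by Theorem \ref{th:join1}. Since $S$ is join closed, every $x_i \vee x_j$ lies in $S$, so we may take $D' = S$. Then both $E_S'$ and $\Upsilon'$ are $n \times n$ square matrices, and from $[S]_{f_1,\ldots,f_n} = \Upsilon' (E_S')^T$ we immediately get $\det [S]_{f_1,\ldots,f_n} = \det \Upsilon' \cdot \det (E_S')^T$.

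First I would verify the triangularity of the two factors, noting that the direction is opposite to the meet case. By \eqref{eq:E'}, $(e_S')_{ij} = 1$ forces $x_i \preceq x_j$, which by the enumeration convention $x_i \preceq x_j \Rightarrow i \leq j$ forces $i \leq j$; hence $E_S'$ is upper triangular with unit diagonal, so $(E_S')^T$ is lower triangular with unit diagonal and $\det (E_S')^T = 1$. Similarly, by \eqref{eq:Phi'} the entry $\upsilon'_{ij} = (e_S')_{ij}\,\Psi'_{S,f_i}(x_j)$ vanishes whenever $i > j$, so $\Upsilon'$ is upper triangular with diagonal entries $\Psi'_{S,f_1}(x_1), \ldots, \Psi'_{S,f_n}(x_n)$, yielding $\det \Upsilon' = \prod_{i=1}^n \Psi'_{S,f_i}(x_i)$.

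Combining these two determinants gives the first equality in \eqref{eq:joindet}. For the second equality I would simply invoke \eqref{eq:Psi3'} at $d_k' = x_i$, which rewrites $\Psi'_{S,f_i}(x_i)$ as the M\"obius sum $\sum_{x_i \preceq x_j} f_i(x_j)\mu_S(x_i,x_j)$, and then take the product over $i$.

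I do not anticipate a genuine obstacle; the proof is purely dual to that of Theorem \ref{th:det}. The one point worth checking carefully is the orientation of the triangularity, which flips compared with the meet case because the zeta matrix of the dual order $\succeq$ is upper, rather than lower, triangular under the fixed enumeration of $S$. Once that bookkeeping is in place the determinants of the two triangular factors multiply exactly as required.
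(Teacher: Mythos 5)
Your proof is correct and is exactly the dual argument the paper intends (the paper omits the proof of Theorem \ref{th:joindet}, stating it is dual to Theorem \ref{th:det}): take $D'=S$, observe that $E_S'$ and $\Upsilon'$ are upper triangular under the fixed enumeration, and multiply the determinants of the factors in \eqref{eq:join1}, finishing with \eqref{eq:Psi3'}. Your explicit check of the flipped triangularity is the only point where the dualization requires care, and you handled it correctly.
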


\begin{theorem}\label{th:joininv}
If the set $S$ is join closed, then the matrix $[S]_{f_1,\ldots,f_n}$ is invertible iff $\Psi_{S,f_i}'(x_i)\neq 0$ for all $i=1,\ldots,n$. Furthermore, in this case the inverse of $[S]_{f_1,\ldots,f_n}$ is the $n\times n$ matrix $B'=(b_{ij}')$ with
\begin{equation}\label{eq:joininv}
b_{ij}'=\sum_{k=1}^j\mu_S(x_k,x_i)\theta_{kj}',
\end{equation}
where the numbers $\theta_{jj}',\theta_{j-1,j}',\ldots,\theta_{1j}'$ are defined recursively as
\begin{equation}\label{eq:theta'}
\theta_{kj}'=\left\{
 \begin{array}{cc}
    \frac{\displaystyle 1}{\displaystyle \Psi_{S,f_j}'(x_j)} & \textrm{if }k=j\textrm{,} \\
    -\frac{\displaystyle 1}{\displaystyle \Psi_{S,f_k}'(x_k)}{\displaystyle \sum_{u=k+1}^{j}e_{ku}'\Psi_{S,f_k}'(x_u)\theta_{uj}'} & \textrm{if }j>k.
 \end{array}
\right.
\end{equation}
\end{theorem}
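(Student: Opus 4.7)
The plan is to mirror the argument used in Theorem~\ref{th:inv}, with ``lower triangular'' systematically replaced by ``upper triangular.'' First, the invertibility equivalence is immediate from Theorem~\ref{th:joindet}: the determinant equals $\prod_{i=1}^n \Psi'_{S,f_i}(x_i)$, which is nonzero precisely when every factor is.

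For the explicit inverse, I would take $D'=S$ (which is allowed since $S$ is join closed) and invoke Theorem~\ref{th:join1} to write $[S]_{f_1,\ldots,f_n} = \Upsilon'(E'_S)^T$. Because $x_i \preceq x_j$ forces $i \leq j$, both $\Upsilon'$ and $E'_S$ are $n \times n$ upper triangular matrices, and $E'_S$ has unit diagonal, so $(E'_S)^T$ is invertible. Hence
\begin{equation}
[S]_{f_1,\ldots,f_n}^{-1} = \bigl((E'_S)^T\bigr)^{-1}(\Upsilon')^{-1}.
\end{equation}

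I would then identify the two factors. The matrix $E'_S$ is the zeta matrix of $(S,\preceq)$, so its inverse is the M\"obius matrix $(\mu_S(x_i,x_j))$, and therefore $\bigl((E'_S)^T\bigr)^{-1}$ has $\mu_S(x_k,x_i)$ as its $(i,k)$ entry, which vanishes unless $x_k \preceq x_i$. Writing $\Theta'=(\theta'_{ij})$ for $(\Upsilon')^{-1}$, the matrix $\Theta'$ is upper triangular, so carrying out the multiplication gives
\begin{equation}
b'_{ij} = \sum_{k=1}^n \mu_S(x_k,x_i)\theta'_{kj} = \sum_{k=1}^j \mu_S(x_k,x_i)\theta'_{kj},
\end{equation}
where the truncation at $k=j$ is forced by $\theta'_{kj}=0$ for $k>j$. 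This establishes~\eqref{eq:joininv}.

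To derive the recursion~\eqref{eq:theta'}, I would expand $\Upsilon'\Theta'=I$ entrywise: the $k$th row of $\Upsilon'$ times the $j$th column of $\Theta'$ equals $\delta_{kj}$, and upper triangularity restricts the inner sum to $k\le u\le j$. The diagonal case $k=j$ leaves only one term, giving $\Psi'_{S,f_j}(x_j)\theta'_{jj}=1$, while for $k<j$ one isolates the $u=k$ term and solves for $\theta'_{kj}$, producing exactly~\eqref{eq:theta'}; since $\theta'_{kj}$ is then expressed through $\theta'_{uj}$ with $u>k$, the quantities $\theta'_{jj},\theta'_{j-1,j},\ldots,\theta'_{1j}$ must be computed in descending order of $k$. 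The only real obstacle is bookkeeping: keeping the reversed orientation of the recursion (compared with Theorem~\ref{th:inv}) consistent throughout, and checking that the summation ranges in~\eqref{eq:joininv} and~\eqref{eq:theta'} correctly reflect both the upper-triangular support of $\Theta'$ and the M\"obius-support condition $x_k \preceq x_i$.
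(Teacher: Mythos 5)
Your proposal is correct and matches the paper's intended argument: the paper omits the proof of Theorem~\ref{th:joininv} precisely because it is the dual of Theorem~\ref{th:inv}, and you have faithfully dualized that proof (invertibility from Theorem~\ref{th:joindet}, the factorization $[S]_{f_1,\ldots,f_n}=\Upsilon'(E'_S)^T$ with $D'=S$, identification of $((E'_S)^T)^{-1}$ with the transposed M\"obius matrix, and the entrywise expansion of $\Upsilon'\Theta'=I$ yielding the descending recursion). The triangularity bookkeeping and summation ranges in \eqref{eq:joininv} and \eqref{eq:theta'} all check out.
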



\begin{thebibliography}{99}

\bibitem{Aig} M.\ Aigner,  \emph{Combinatorial Theory},
Springer--Verlag, New York, 1979.

\bibitem{ATH}
E. Altinisik, N. Tuglu and P. Haukkanen, Determinant and inverse of meet and join matrices, \emph{Int. J. Math. Math. Sci. vol. 2007} (2007) Article ID 37580.

\bibitem{B}
A. Bege, Generalized GCD matrices, \emph{Acta Univ. Sapientiae Math. 2, 2} (2010) 160-167.

\bibitem{BeL}
S. Beslin and S. Ligh, Greatest common divisor matrices, \emph{Linear Algebra Appl. 118} (1989) 69-76.

\bibitem{H2}
P. Haukkanen, Higher-dimensional GCD matrices, \emph{Linear Algebra Appl. 170} (1992) 53-63.

\bibitem{H}
P. Haukkanen, On meet matrices on posets, \emph{Linear Algebra Appl. 249} (1996) 111-123.

\bibitem{HWS}
P. Haukkanen, J. Wang and J. Sillanpää, On Smith's determinant, \emph{Linear Algebra Appl. 258} (1997) 251-269.

\bibitem{Ho}
S. Hong, X. Zhou and J. Zhao,
Power GCD matrices for a UFD. \emph{Algebra Colloq. 16 no. 1} (2009) 71-78. 

\bibitem{K}
I. Korkee, On a combination of meet and join matrices, \emph{JP J. Algebra Number Theory Appl. 5(1)} (2005) 75-88.

\bibitem{KH}
I. Korkee and P. Haukkanen, On meet and join matrices associated with incidence functions, \emph{Linear Algebra Appl. 372} (2003) 127-153.

\bibitem{L}
B. Lindström, Determinants on semilattices, \emph{Proc. Amer. Math. Soc. 20} (1969) 207-208.

\bibitem{LS}
P. Lindqvist and K. Seip, Note on some greatest common divisor matrices, \emph{Acta Arith. 84} (1998) 149-154.

\bibitem{Lu}
J.-G. Luque, Hyperdeterminants on semilattices, \emph{Linear Multilinear Algebra Vol. 56, No. 3} (2008) 333-344.

\bibitem{RB}
B. V. Rajarama Bhat, On greatest common divisor matrices and their applications, \emph{Linear Algebra Appl. 158} (1991) 77-97.

\bibitem{R}
G.-C. Rota, On the foundations of combinatorial theory. I. Theory of M\"{o}bius functions.
\emph{Z. Wahrscheinlichkeitstheorie und Verw. Gebiete 2} (1964) 340-368. 

\bibitem{S}
H. J. S. Smith, On the value of a certain arithmetical determinant, \emph{Proc. London Math. Soc. 7} (1875-1876) 208-212.

\bibitem{St}
R. P. Stanley, \emph{Enumerative Combinatorics}, Vol. 1, Corrected reprint of the 1986 original, Cambridge studies in Advanced Mathematics, 49, Cambridge University Press, 1997.

\bibitem{Wi}
H. S. Wilf, Hadamard determinants, Möbius functions, and the chromatic number of a graph, \emph{Bull. Amer. Math. Soc. 74} (1968) 960-964. 

\bibitem{W}
A. Wintner, Diophantine approximations and Hilbert's space, \emph{Amer. J. Math. 66} (1944) 564-578.

\end{thebibliography}
\end{document}